\title[$r$-Stable Hypersurfaces in Conformally Stationary Spacetimes]{$r$-Stable Spacelike Hypersurfaces in\\ Conformally Stationary Spacetimes}
\newtheorem{theorem}{Theorem}[section]
\newtheorem{lemma}[theorem]{Lemma}
\newtheorem{proposition}[theorem]{Proposition}
\newtheorem{corollary}[theorem]{Corollary}
\theoremstyle{definition}
\newtheorem{definition}[theorem]{Definition}
\theoremstyle{remark}
\newtheorem{remark}[theorem]{Remark}
\numberwithin{equation}{section}
\author{F. Camargo}
\address{Departamento de Matem\'atica e Estat\'{\i}stica,
Universidade Federal de Campina Grande, Campina Grande,
Para\'{\i}ba, Brazil. 58109-970} \email{fernandaecc@dme.ufcg.edu.br}
\author{A. Caminha}
\address{Departamento de Matem\'atica, Universidade Federal do Cear\'a, Fortaleza,
Cear\'a, Brazil. 60455-760} \email{antonio.caminha@gmail.com}
\author{H. de Lima}
\address{Departamento de Matem\'atica e Estat\'{\i}stica,
Universidade Federal de Campina Grande, Campina Grande,
Para\'{\i}ba, Brazil. 58109-970} \email{henrique@dme.ufcg.edu.br}
\author{M. Vel\'asquez}
\address{Departamento de Matem\'atica, Universidade Federal do Cear\'a, Fortaleza,
Cear\'a, Brazil. 60455-760} \email{marcolazarovelasquez@gmail.com}
\subjclass[2000]{Primary 53C42; Secondary 53B30, 53C50, 53Z05,
83C99}
\keywords{Higher order mean curvatures; $r$-stability, Conformally
Stationary Spacetimes, de Sitter space}
\thanks{The second author is partially supported by CNPq, Brazil. The third author is partially supported by CNPq/FAPESQ/PPP, Brazil.
The last author is supported by CAPES, Brazil.}
\begin{document}

\maketitle

\begin{abstract}
In this paper we study the r-stability of closed spacelike hypersurfaces with constant $r$-th mean curvature in conformally stationary spacetimes of constant sectional curvature. In this setting, we obtain a characterization of $r-$stability through the analysis of the first eigenvalue of an operator naturally attached to the $r$-th mean curvature. As an application, we treat the case in which the spacetime is the de Sitter space.
\end{abstract}

\section{Introduction}
The notion of stability concerning hypersurfaces of constant mean curvature of Riemannian ambient spaces was first studied by Barbosa and do Carmo in~\cite{BdC:84}, and Barbosa, do Carmo and Eschenburg in~\cite{BdCE:88}, where they proved that spheres are the only stable critical points of the area functional for volume-preserving variations.

In the Lorentz context, in 1993 Barbosa and Oliker~\cite{Barbosa:93} obtained an analogous result, proving that
constant mean curvature spacelike hypersurfaces in Lorentz manifolds are also critical points of the area functional for variations that keep the volume constant. They also computed the second variation formula and showed, for the de Sitter space $\mathbb S_1^{n+1}$, that spheres maximize the area functional for volume-preserving variations.

More recently, Liu and Junlei \cite{Ximin} have characterized the $r$-stable closed spacelike hypersurfaces with constant scalar curvature in the de Sitter space.

The natural generalization of mean and scalar curvatures for an $n-$dimensional hypersurface is the $r$-th mean curvatures $H_r$, for $r=1,\cdots,n$. In fact, $H_1$ is just the mean curvature and $H_2$ defines a geometric quantity which is related to the scalar curvature.

In \cite{Camargo:09}, some of the authors have studied the problem of strong stability (that is, stability with respect to not necessarily volume-preserving variations) for spacelike hypersurfaces with constant $r$-th mean curvature in a Generalized Robertson-Walker (GRW) spacetime, giving a characterization of $r$-maximal and
spacelike slices.

Here, motivated by these works, we consider closed spacelike hypersurfaces with constant $r$-th mean curvature in a wide class of Lorentz manifolds, the so-called {\it conformally stationary spacetimes}, in order to obtain a relation between $r$-stability and the spectrum of a certain elliptic operator naturally attached to the $r$-th mean curvature of the hypersurfaces. Our approach is based on the use of the Newton transformations $P_r$ and their associated second order differential operators $L_r$ (cf. Section \ref{sec:Preliminaries}). More precisely, we prove the following result.

\begin{theorem}
Let $\overline M^{n+1}_c$ be a conformally stationary Lorentz manifold with constant curvature $c$. Suppose that $\overline M^{n+1}_c$ has a closed conformal vector field $V$ and a Killing vector field $W$. Let $x:M^n\rightarrow\overline M^{n+1}_c$ be a closed spacelike hypersurface, with constant, positive $(r+1)$-th
mean curvature $H_{r+1}$ such that
$$\lambda=c(n-r){n\choose r}H_r-nH_1{n\choose r+1}H_{r+1}-(r+2){n\choose r+2}H_{r+2}$$ 
is constant. Assume also that $Div_{\, \overline M}\, V$ does not
vanish on $M^n$. Then $x$ is $r$-stable if and only if $\lambda$ is
the first eigenvalue of $L_r$ on $M^n$.
\end{theorem}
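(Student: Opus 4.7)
The plan is to reinterpret $r$-stability as a spectral condition on $L_r$, via the second variation formula for the $(r+1)$-area functional restricted to volume-preserving variations. Building on the setup of Section~\ref{sec:Preliminaries} and the computations in~\cite{Camargo:09}, admissible normal variations of a hypersurface with constant $H_{r+1}$ are parametrized by smooth functions $f$ with $\int_M f\,dM = 0$, and the corresponding index form reduces to
\[
I(f,f) \;=\; -\int_M f\bigl(L_r f + \lambda f\bigr)\,dM,
\]
where $\lambda$ is precisely the constant in the statement: its three summands arise from $c\operatorname{tr}(P_r)$, $nH_1\operatorname{tr}(AP_r) = nH_1\binom{n}{r+1}H_{r+1}$, and $\operatorname{tr}(A^2P_r)$, reduced via Newton's identities. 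Thus $x$ is $r$-stable if and only if $I(f,f)\le 0$ for every $f$ with $\int_M f\,dM = 0$.

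The key algebraic input comes from the Killing field $W$: because $\overline{M}^{n+1}_c$ has constant sectional curvature $c$ and $\overline{\nabla}W$ is antisymmetric, a direct computation of $\operatorname{Hess}\langle W,N\rangle$ (the Killing equation kills the symmetric part of $\overline{\nabla}W$, while constant curvature controls the ambient Riemann tensor contribution) followed by contraction with $P_r$ and application of Newton's identities yields
\[
L_r\langle W,N\rangle + \lambda\langle W,N\rangle \;=\; 0
\]
on $M^n$. Hence $\lambda$ already belongs to the spectrum of $-L_r$, with eigenfunction $\langle W,N\rangle$.

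For the forward direction, if $\lambda$ is the first (nontrivial) eigenvalue of $-L_r$, the Rayleigh characterization gives $-\int_M f L_r f \ge \lambda\int_M f^2$ for every $f$ with $\int_M f\,dM = 0$, so $I(f,f)\le 0$ and $x$ is $r$-stable. Conversely, $r$-stability gives the inequality $\lambda \le \lambda_1(-L_r)$ via the same Rayleigh quotient, while the Killing-field identity above forces $\lambda \ge \lambda_1(-L_r)$; the nonvanishing hypothesis on $\operatorname{Div}_{\overline M}V$ enters here, through a Stokes-type argument applied to the closed conformal field $V$, to ensure that $\langle W,N\rangle$ (or an appropriate modification of it) actually realizes the first eigenvalue in the admissible subspace, not merely lies somewhere in the spectrum.

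The delicate step is precisely this converse. Since $P_r$ is divergence-free in constant curvature we have $L_r 1 = 0$ identically, so constants automatically form a trivial kernel of $L_r$ and the admissibility constraint $\int_M f\,dM = 0$ is tightly linked to that kernel. The main obstacle will be orchestrating the Killing-field eigenfunction $\langle W,N\rangle$ together with the closed conformal field $V$ and its nonzero divergence to identify $\lambda$ unambiguously with the first nontrivial eigenvalue of $-L_r$; without the hypothesis $\operatorname{Div}_{\overline M}V\ne 0$ on $M$, the test function furnished by $V$ would not project nontrivially onto the admissible subspace, and the argument would fail to exclude higher eigenvalues.
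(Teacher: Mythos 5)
Your skeleton does coincide with the paper's: the second variation yields the quadratic form $\mathcal J_r''(0)(f)=(r+1)\int_M f\,(L_rf+\lambda f)\,dM$ on mean-zero functions, the Killing field gives $L_r\langle W,N\rangle+\lambda\langle W,N\rangle=0$ so that $\lambda$ lies in the spectrum, the forward implication is the Rayleigh characterization of the first eigenvalue, and the converse is the pair of inequalities $\lambda\le\lambda_1$ (stability tested against mean-zero functions) and $\lambda_1\le\lambda$. But there is a genuine gap: you never establish that $L_r$ is elliptic, and you misassign the hypotheses that are in the statement precisely for that purpose. In the paper, the closed conformal field $V$ with $Div_{\,\overline M}\,V\neq 0$ on $M^n$ forces the existence of an elliptic point (Corollary 5.5 of \cite{ABC:03}), and this together with $H_{r+1}>0$ makes $L_r$ elliptic, with $P_r$ positive definite (Lemmas 3.2 and 3.3 of \cite{Alias:07}). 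That ellipticity is what legitimizes every spectral ingredient you invoke: a discrete spectrum, a well-defined first (nontrivial) eigenvalue, the variational characterization of $\lambda_1$ over $\{f:\int_M f\,dM=0\}$, and the existence of a first eigenfunction to test stability against. Without this step the phrase ``first eigenvalue of $L_r$'' is not even defined, and both directions of your argument are unsupported.

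Relatedly, you give $Div_{\,\overline M}\,V\neq 0$ an incorrect role in the converse (a ``Stokes-type argument'' meant to make $\langle W,N\rangle$, or a modification of it, realize the first eigenvalue and ``exclude higher eigenvalues''), and you end by declaring this the unresolved ``main obstacle''. No such argument is needed: once ellipticity is in place, the converse is complete with the two inequalities you already wrote. The function $\langle W,N\rangle$ need not realize $\lambda_1$; it only has to exhibit $\lambda$ as an eigenvalue (and since $L_r$ is in divergence form, a nonzero eigenvalue automatically has mean-zero eigenfunctions), while the bound $\lambda\le\lambda_1$ comes from applying the stability inequality to a first eigenfunction, which is admissible because it is $L^2$-orthogonal to the constants. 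Two smaller points to repair: with your convention $I(f,f)=-\int_M f\,(L_rf+\lambda f)\,dM$, Lorentzian $r$-stability ($\mathcal A_r''(0)\le 0$) reads $I(f,f)\ge 0$, and your Rayleigh inequality indeed yields $I(f,f)\ge 0$, not $I(f,f)\le 0$ as written (the two sign slips cancel, but as stated the bookkeeping is inconsistent); and the middle term of $\lambda$ does not come from $nH_1\,{\rm tr}(AP_r)$ --- both the $H_1H_{r+1}$ and the $H_{r+2}$ terms come from ${\rm tr}(A^2P_r)=(-1)^r(S_1S_{r+1}-(r+2)S_{r+2})$, while ${\rm tr}(AP_r)$ enters nowhere in $\lambda$.
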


As an application of the previous result, we obtain the following corollary in the de Sitter space.

\begin{corollary}
Let $x:M^n\rightarrow\mathbb S_1^{n+1}$ be a closed spacelike hypersurface, contained in the chronological future (or past) of an equator of $\mathbb S_1^{n+1}$, with positive constant $(r+1)-$th mean curvature such that
$$\lambda=(n-r){n\choose r}H_r-nH{n\choose r+1}H_{r+1}-(r+2){n\choose r+2}H_{r+2}$$ is constant. Then $x$ is $r$-stable if and only if $\lambda$ is the first eigenvalue of $L_r$ on $M^n$.
\end{corollary}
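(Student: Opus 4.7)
The plan is to derive the corollary as a direct specialization of the main theorem to the case of the de Sitter space $\mathbb S_1^{n+1}$, which has constant sectional curvature $c=1$. Substituting $c=1$ into the expression from the theorem collapses it, after the identification $H=H_1$, exactly to the formula for $\lambda$ in the corollary. It therefore suffices to exhibit on $\mathbb S_1^{n+1}$ a closed conformal vector field $V$ and a Killing vector field $W$ (so that $\mathbb S_1^{n+1}$ is conformally stationary in the sense required by the theorem), and to verify that the hypothesis about the chronological future/past forces $Div_{\,\overline M}\,V$ not to vanish along $M$.

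First, I would realize $\mathbb S_1^{n+1}$ as the hyperquadric $\{p\in\mathbb L^{n+2}:\langle p,p\rangle=1\}$ in Lorentz--Minkowski space $\mathbb L^{n+2}$. Fixing a unit timelike vector $a\in\mathbb L^{n+2}$, one sets $V_p=a+\langle a,p\rangle p$, namely the tangential projection of $a$ onto $T_p\mathbb S_1^{n+1}$. A direct computation gives $\overline\nabla_X V=\psi X$ with $\psi(p)=\langle a,p\rangle$, so that $V$ is both closed and conformal; tracing yields $Div_{\,\overline M}\,V=(n+1)\psi=(n+1)\langle a,p\rangle$. For the Killing field $W$, it suffices to recall that the isometry group of $\mathbb S_1^{n+1}$ is $O(n+1,1)$, so nontrivial Killing fields abound; one may take, for instance, the restriction to $\mathbb S_1^{n+1}$ of an infinitesimal rotation in two fixed spacelike directions of $\mathbb L^{n+2}$.

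The crux of the argument is then the causal interpretation of the hypothesis. The equator of $\mathbb S_1^{n+1}$ determined by $a$ is the totally geodesic hypersurface $\{p\in\mathbb S_1^{n+1}:\langle a,p\rangle=0\}$, and its chronological future (respectively past) coincides with the open region where $\langle a,p\rangle>0$ (respectively $<0$); this is the standard description of the causal structure inherited from $\mathbb L^{n+2}$. Consequently, if $M$ lies in one of these two regions, the function $\langle a,\cdot\rangle$, and with it $Div_{\,\overline M}\,V$, has constant nonzero sign on $M$.

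With $c=1$, the vector fields $V$ and $W$ in place, and $Div_{\,\overline M}\,V\neq 0$ on $M$ established, every hypothesis of the main theorem is in force, and the equivalence between $r$-stability of $x$ and $\lambda$ being the first eigenvalue of $L_r$ on $M^n$ follows immediately. The only real obstacle in this reduction is the causal step---translating ``contained in the chronological future (or past) of an equator'' into ``$\langle a,\cdot\rangle$ does not vanish on $M$''---but this is a routine consequence of the extrinsic geometry of $\mathbb S_1^{n+1}\subset\mathbb L^{n+2}$.
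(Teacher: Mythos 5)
Your proposal is correct and takes essentially the same approach as the paper: specialize the main theorem to $c=1$, take the closed conformal field associated with a fixed unit timelike $a\in\mathbb L^{n+2}$ plus any standard Killing field of $\mathbb S_1^{n+1}$, and use the chronological future/past hypothesis to conclude that $Div_{\,\overline M}\,V=\pm(n+1)\langle a,\cdot\rangle$ has constant nonzero sign on $M$. One small slip: $a+\langle a,p\rangle p$ is not tangent to $\mathbb S_1^{n+1}$; the tangential projection is $V(p)=a-\langle a,p\rangle p$ (as in the paper), which is still closed and conformal with $\psi=-\langle a,p\rangle$, so the non-vanishing argument and the rest of your reduction are unaffected.
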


\section{Preliminaries}\label{sec:Preliminaries}

Let $\overline M^{n+1}$ denote a time-oriented Lorentz manifold with Lorentz metric $\overline g=\langle\,\,,\,\,\rangle$, volume element $d\overline M$ and semi-Riemannian connection $\overline\nabla$. In this context, we consider spacelike hypersurfaces $x:M^n\rightarrow\overline M^{n+1}$, namely, isometric immersions from a connected, $n-$dimensional orientable Riemannian manifold $M^n$ into $\overline M$. We let $\nabla$ denote the
Levi-Civita connection of $M^n$.

If $\overline M$ is time-orientable and $x:M^n\rightarrow\overline M^{n+1}$ is a spacelike  hypersurface, then $M^n$ is orientable (cf.~\cite{ONeill:83}) and one can choose a globally defined unit normal vector field $N$ on $M^n$ having the same time-orientation of $\overline M$. Such an $N$ is named {\em future-pointing Gauss map}
of $M^n$. In this setting, let $A$ denote the shape operator of $M$ with respect to $N$, so that at each $p\in M^n$, $A$ restricts to a self-adjoint linear map $A_p:T_pM\rightarrow T_pM$.

For $1\leq r\leq n$, let $S_r(p)$ denote the $r$-th elementary symmetric function on the eigenvalues of $A_p$; this way one gets $n$ smooth functions $S_r:M^n\rightarrow\mathbb R$, such that
$$\det(tI-A)=\sum_{k=0}^n(-1)^kS_kt^{n-k},$$
where $S_0=1$ by definition. If $p\in M^n$ and $\{e_k\}$ is a basis of $T_pM$ formed by eigenvectors of $A_p$, with corresponding eigenvalues $\{\lambda_k\}$, one immediately sees that
$$S_r=\sigma_r(\lambda_1,\ldots,\lambda_n),$$
where $\sigma_r\in\mathbb R[X_1,\ldots,X_n]$ is the $r$-th elementary symmetric polynomial on the indeterminates
$X_1,\ldots,X_n$.

For $1\leq r\leq n$, one defines the $r$-th mean curvature $H_r$ of $x$ by
$${n\choose r}H_r=(-1)^rS_r=\sigma_r(-\lambda_1,\ldots,-\lambda_n).$$
Also, for $0\leq r\leq n$, the $r$-th Newton transformation $P_r$ on $M^n$ is defined by setting $P_0=I$ (the
identity operator) and, for $1\leq r\leq n$, via the recurrence relation
\begin{equation}\label{eq:Newton operators}
P_r=(-1)^rS_rI+AP_{r-1}.
\end{equation}
A trivial induction shows that
$$P_r=(-1)^r(S_rI-S_{r-1}A+S_{r-2}A^2-\cdots+(-1)^rA^r),$$
so that Cayley-Hamilton theorem gives $P_n=0$. Moreover, since $P_r$ is a polynomial in $A$ for every $r$, it is also self-adjoint and commutes with $A$. Therefore, all bases of $T_pM$ diagonalizing $A$ at $p\in M^n$ also diagonalize all of the $P_r$ at $p$. Let $\{e_k\}$ be such a basis. Denoting by $A_i$ the restriction of $A$ to $\langle e_i\rangle^{\bot}\subset T_p\Sigma$, it is easy to see that
$$\det(tI-A_i)=\sum_{k=0}^{n-1}(-1)^kS_k(A_i)t^{n-1-k},$$
where
$$S_k(A_i)=\sum_{\stackrel{1\leq j_1<\ldots<j_k\leq n}{j_1,\ldots,j_k\neq i}}\lambda_{j_1}\cdots\lambda_{j_k}.$$

With the above notations, it is also immediate to check that $P_re_i=(-1)^rS_r(A_i)e_i$, and hence (cf. Lemma 2.1
of~\cite{Barbosa:97})
\begin{equation}\label{eq:tracos dos Pr}
 \begin{split}
&{\rm tr}(P_r)=(-1)^r(n-r)S_r=b_rH_r;\\
&{\rm tr}(AP_r)=(-1)^r(r+1)S_{r+1}=-b_rH_{r+1};\\
&{\rm tr}(A^2P_r)=(-1)^r(S_1S_{r+1}-(r+2)S_{r+2}),
 \end{split}
\end{equation}
where $b_r=(n-r){n\choose r}$.

Associated to each Newton transformation $P_r$ one has the second order linear differential operator $L_r:\mathcal
D(M)\rightarrow\mathcal D(M)$, given by
$$L_r(f)={\rm tr}(P_r\,\text{Hess}\,f).$$
For instance, when $r=0$, $L_r$ is simply the Laplacian operator.

According to \cite{Alias:07}, if $\overline M^{n+1}$ is of constant sectional curvature, then $P_r$ is divergence-free and, consequently,
$$L_r(f)={\rm div}(P_r\nabla f).$$

If $x$ is as above, a {\em variation} of it is a smooth mapping 
$$X:M^n\times(-\epsilon,\epsilon)\rightarrow\overline M^{n+1}$$
satisfying the following conditions:
\begin{enumerate}
\item[(1)] For $t\in(-\epsilon,\epsilon)$, the map $X_t:M^n\rightarrow\overline M^{n+1}$ given by $X_t(p)=X(t,p)$ is a spacelike immersion such that $X_0=x$.
\item[(2)] $X_t\big|_{\partial M}=x\big|_{\partial M}$, for all $t\in(-\epsilon,\epsilon)$.
\end{enumerate}

In all that follows, we let $dM_t$ denote the volume element of the metric induced on $M$ by $X_t$ and $N_t$ the unit normal vector field along $X_t$.

The {\em variational field} associated to the variation $X$ is the vector field $\frac{\partial X}{\partial t}\Big|_{t=0}$. Letting $f=-\langle\frac{\partial X}{\partial t},N_t\rangle$, we get
\begin{equation}\label{eq:decomposition of variational vector field}\frac{\partial X}{\partial t}=fN_t+\left(\frac{\partial X}{\partial t}\right)^{\top},\end{equation} where $\top$ stands for tangential components. 

The {\em balance of volume} of the variation $X$ is the function $\mathcal V:(-\epsilon,\epsilon)\rightarrow\mathbb R$ given by 
$$\mathcal V(t)=\int_{M\times[0,t]}X^*(d\overline M),$$
and we say $X$ is {\em volume-preserving} if $\mathcal V$ is constant.

From now on, we will consider only closed spacelike hypersurface $x:M^n\rightarrow\overline M^{n+1}$. The following lemma is enough known and can be found in (cf.~\cite{Xin:03}).

\begin{lemma}\label{lemma:first variation}
Let $\overline M^{n+1}$ be a time-oriented Lorentz manifold and $x:M^n\rightarrow\overline M^{n+1}$ a closed spacelike hypersurface. If 
$X:M^n\times(-\epsilon,\epsilon)\rightarrow\overline M^{n+1}$ is a
variation of $x$, then
$$\frac{d\mathcal V}{dt}=\int_MfdM_t.$$
In particular, $X$ is volume-preserving if and only if $\int_MfdM_t=0$ for all $t$.
\end{lemma}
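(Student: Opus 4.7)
The plan is to compute $d\mathcal V/dt$ directly from the definition $\mathcal V(t)=\int_{M\times[0,t]}X^*(d\overline M)$ by expanding the pullback of the volume form, and then applying Fubini together with the fundamental theorem of calculus.

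First I would pick, at each point $(p,s)\in M\times[0,t]$, a local orthonormal frame $\{e_1,\dots,e_n\}$ on $M$ near $p$, positively oriented with respect to the orientation induced from $N_s$. Since $M\times[0,t]$ is $(n+1)$-dimensional, the pullback $X^*(d\overline M)$ at $(p,s)$ is determined by its value on $\{\partial/\partial s,e_1,\dots,e_n\}$:
$$
X^*(d\overline M)\bigl(\tfrac{\partial}{\partial s},e_1,\dots,e_n\bigr)=d\overline M\bigl(\tfrac{\partial X}{\partial s},X_*e_1,\dots,X_*e_n\bigr).
$$
The vectors $X_*e_i$ span the tangent space of $X_s(M)$, so the tangential part of $\partial X/\partial s$ contributes nothing to this determinant. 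Using the decomposition~\eqref{eq:decomposition of variational vector field}, only the normal component $fN_s$ survives, and I get
$$
X^*(d\overline M)\bigl(\tfrac{\partial}{\partial s},e_1,\dots,e_n\bigr)=f\,d\overline M(N_s,X_*e_1,\dots,X_*e_n).
$$

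Next, since $N_s$ is a timelike unit normal and $\{X_*e_1,\dots,X_*e_n\}$ is an orthonormal basis of the spacelike tangent space $T_{X_s(p)}X_s(M)$, the $(n+1)$-tuple $\{N_s,X_*e_1,\dots,X_*e_n\}$ is an orthonormal frame of $T_{X_s(p)}\overline M$, positively oriented by choice of $N_s$. Therefore $d\overline M(N_s,X_*e_1,\dots,X_*e_n)=1$, and passing to differential forms on $M\times[0,t]$ this reads
$$
X^*(d\overline M)=f\,ds\wedge dM_s.
$$

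Finally, Fubini's theorem gives $\mathcal V(t)=\int_0^t\!\bigl(\int_M f\,dM_s\bigr)\,ds$, and differentiating in $t$ yields the claimed formula $d\mathcal V/dt=\int_M f\,dM_t$. The characterization of volume-preserving variations then follows immediately, since $\mathcal V(0)=0$ and $\mathcal V$ is constant iff its derivative vanishes identically. The only delicate point is the orientation/sign check for the timelike normal, which forces the $-\langle\partial X/\partial t,N_t\rangle$ in the definition of $f$ to match the positive-definite volume element $dM_t$; this is the step I would double-check most carefully, but it is a bookkeeping issue rather than a conceptual obstacle.
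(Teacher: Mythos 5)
Your computation is correct and is in essence the expected argument: the paper offers no proof of this lemma at all, only the citation to \cite{Xin:03}, and the proof found there (as in the Riemannian antecedent of Barbosa--do Carmo--Eschenburg) is exactly the pullback computation you carry out --- decompose $\frac{\partial X}{\partial t}$ as in \eqref{eq:decomposition of variational vector field}, observe that the tangential part is killed in $X^*(d\overline M)$, obtain $X^*(d\overline M)=f\,ds\wedge dM_s$, and integrate via Fubini and the fundamental theorem of calculus. The sign issue you flag is indeed only bookkeeping: $f=-\langle\frac{\partial X}{\partial t},N_t\rangle$ is precisely the coefficient of $N_t$ in the normal decomposition (the minus sign compensating $\langle N_t,N_t\rangle=-1$), and with the convention that $\{N_t,X_*e_1,\dots,X_*e_n\}$ is positively oriented in $\overline M$ whenever $\{e_1,\dots,e_n\}$ is positively oriented in $M$, the Lorentzian volume form evaluates to $1$ on this frame, giving the stated formula without any stray sign.
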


We remark that Lemma 2.2 of~\cite{BdCE:88} remains valid in the Lorentz context, i.e., if $f_0:M\rightarrow\mathbb R$ is a smooth function such that $\int_Mf_0dM=0$, then there exists a volume-preserving variation of $M$ whose variational field is $f_0N$.

In order to extend~\cite{Barbosa:97} to the Lorentz setting, we let the {\em $r$-area functional} $\mathcal
A_r:(-\epsilon,\epsilon)\rightarrow\mathbb R$ associated to the variation $X$ be given by
$$\mathcal A_r(t)=\int_MF_r(S_1,S_2,\ldots,S_r)dM_t,$$
where $S_r=S_r(t)$ and $F_r$ is recursively defined by setting $F_0=1$, $F_1=-S_1$ and, for $2\leq r\leq n-1$,
$$F_r=(-1)^rS_r-\frac{c(n-r+1)}{r-1}F_{r-2}.$$
We notice that if $r=0$, the functional $\mathcal A_0$ is the classical area functional.

The next step is the Lorentz analogue of Proposition 4.1 of~\cite{Barbosa:97}. From Lemma 2.2 in \cite{Camargo:09} we obtain the following result.

\begin{lemma}\label{lemma:computing the time-derivative of Sr}
Let $x:M^n\rightarrow\overline M^{n+1}_c$ be a closed spacelike hypersurface of the time-oriented Lorentz manifold $\overline M^{n+1}_c$ with constant curvature $c$, and let $X:M^n\times(-\epsilon,\epsilon)\rightarrow\overline M^{n+1}_c$ be a variation of $x$. Then,
\begin{equation}\label{eq:differentiation of $S_r$}
\frac{\partial S_{r+1}}{\partial t}=(-1)^{r+1}\left[L_{r}f+c{\rm tr}(P_{r})f-{\rm tr}(A^2P_{r})f\right]+\langle\left(\frac{\partial X}{\partial t}\right)^{\top},\nabla S_{r+1}\rangle.
\end{equation}
\end{lemma}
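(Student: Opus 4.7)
The plan is to split the variational field as $\partial X/\partial t = fN_t + Y_t$ with $Y_t = (\partial X/\partial t)^{\top}$, and handle the two summands separately. A purely tangential variation generates an infinitesimal diffeomorphism of $M$, leaving the image of $X_t$ pointwise fixed and merely reparametrizing it; hence at $t=0$ the contribution of $Y_t$ to $\partial S_{r+1}/\partial t$ is the directional derivative $Y_{0}(S_{r+1}) = \langle (\partial X/\partial t)^{\top}, \nabla S_{r+1}\rangle$, which accounts for the last summand on the right-hand side. I am thus reduced to the case of a normal variation $\partial X/\partial t = fN_t$.

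For a normal variation I would invoke the standard differentiation rule for the elementary symmetric polynomial of the eigenvalues of a smooth family of self-adjoint operators, namely
$$\frac{\partial S_{r+1}}{\partial t} = {\rm tr}\!\left(T_{r}(A)\,\frac{\partial A}{\partial t}\right) = (-1)^{r}\,{\rm tr}\!\left(P_{r}\,\frac{\partial A}{\partial t}\right),$$
where $T_{r}(A) = S_{r}I - S_{r-1}A + \cdots + (-1)^{r}A^{r}$, so that $P_{r} = (-1)^{r}T_{r}(A)$ by the explicit formula recorded after~\eqref{eq:Newton operators}. The heart of the argument is therefore the computation of $\partial A/\partial t$ at $t = 0$.

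To compute $\partial A/\partial t$, I would extend a local frame $\{e_{i}\}$ of $M$ to coordinate vector fields on $M\times(-\epsilon,\epsilon)$, so that $[\partial/\partial t, e_{i}] = 0$ as vector fields on $\overline M$, and then differentiate the Weingarten relation $\overline\nabla_{e_{i}}N_{t} = -A_{t}e_{i}$ in $t$. Commuting $\overline\nabla_{\partial_{t}}$ past $\overline\nabla_{e_{i}}$ produces the curvature term $\overline R(\partial_{t}, e_{i})N_{t}$, which in constant sectional curvature $c$ collapses to a multiple of $e_{i}$ once one uses $\langle N, N\rangle = -1$; meanwhile $\overline\nabla_{\partial_{t}}N_{t}$ is fixed, up to an irrelevant tangential correction, by the normalization condition and yields $-\nabla f$. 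Assembling the pieces (and using the Gauss formula once to rewrite $\overline\nabla_{e_{i}}(-\nabla f)$ in terms of $\text{Hess}\,f$ and $A$) produces an identity of the shape
$$\left.\frac{\partial A}{\partial t}\right|_{t=0} = -\,\text{Hess}\,f \,-\, cfI \,+\, fA^{2}.$$

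Taking the trace against $P_{r}$ and using the definition $L_{r}f = {\rm tr}(P_{r}\,\text{Hess}\,f)$ turns the three summands into $-L_{r}f$, $-c\,{\rm tr}(P_{r})f$, and ${\rm tr}(A^{2}P_{r})f$ respectively; multiplication by the overall $(-1)^{r}$ then reproduces exactly the bracketed expression in the statement. I expect the main obstacle to be the sign bookkeeping rather than any conceptual difficulty: the timelike normalization $\langle N, N\rangle = -1$, the sign convention in the Weingarten operator, and the Lorentz form of the constant-curvature identity $\overline R(X,Y)Z = c(\langle Y,Z\rangle X - \langle X,Z\rangle Y)$ all feed into the derivation of $\partial A/\partial t$, and a single misplaced sign would flip one of the three terms in the final bracket. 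This is precisely the calculation carried out in Lemma~2.2 of~\cite{Camargo:09}, which the authors invoke here.
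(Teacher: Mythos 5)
Your strategy is the same one the paper relies on: the paper gives no proof of this lemma at all, saying only that it follows from Lemma 2.2 of \cite{Camargo:09}, and that computation is exactly your scheme --- reduce to a normal variation (the tangential part only reparametrizes and contributes the directional derivative term, and proving the identity at $t=0$ suffices, since each $X_t$ is itself a spacelike immersion), then use $\partial_t S_{r+1}={\rm tr}(T_r\,\partial_t A)$ with $T_r=(-1)^rP_r$, together with the evolution of the shape operator in a constant curvature ambient. Your final identity $\partial_t A=-{\rm Hess}\,f-cfI+fA^2$ is the correct one for the paper's conventions ($AX=-\overline\nabla_XN$, $\langle N,N\rangle=-1$), and tracing it against $P_r$ gives precisely the bracket in the statement.

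One intermediate sign is off, and it is exactly the Lorentzian subtlety you flagged: differentiating $\langle N_t,X_*e_i\rangle=0$ in $t$ with $\partial X/\partial t=fN_t$ gives $\langle\overline\nabla_{\partial_t}N,e_i\rangle=-\langle N,\overline\nabla_{e_i}(fN)\rangle=e_i(f)$ because $\langle N,N\rangle=-1$, so $\overline\nabla_{\partial_t}N=+\nabla f$; the value $-\nabla f$ you quote is the Riemannian one. This flip is precisely what produces the $-{\rm Hess}\,f$ term, hence the sign $(-1)^{r+1}L_rf$ in the lemma; carrying $-\nabla f$ through the commutation argument would instead yield $\partial_t A=+{\rm Hess}\,f-cfI+fA^2$, which contradicts both the final formula you state and the lemma. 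So the approach and the conclusion are right, but as written your intermediate step and your final expression for $\partial_t A$ are inconsistent; correcting the sign of $\overline\nabla_{\partial_t}N$ closes the derivation.
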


The previous lemma allows us to compute the first variation of the $r$-area functional.

\begin{proposition}\label{prop:first variation}
Under the hypotheses of Lemma~\ref{lemma:computing the time-derivative of Sr}, if $X$ is a variation of $x$, then
\begin{equation}\label{eq:first variation}
\mathcal A_r'(t)=\int_M[(-1)^{r+1}(r+1)S_{r+1}+c_r]f\,dM_t,
\end{equation}
where $c_r=0$ if $r$ is even and $c_r=-\frac{n(n-2)(n-4)\ldots(n-r+1)}{(r-1)(r-3)\ldots 2}(-c)^{(r+1)/2}$ if $r$ is odd.
\end{proposition}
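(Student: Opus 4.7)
The plan is to induct on $r$, exploiting the recursive definition of $F_r$ to relate $\mathcal A_r'(t)$ to $\mathcal A_{r-2}'(t)$ plus $\frac{d}{dt}\int_MS_r\,dM_t$. The base case $r=0$ is the classical first variation of area of a spacelike hypersurface, giving $\mathcal A_0'(t)=-\int_MS_1f\,dM_t$ and in particular pinning down the identity
\[
\frac{\partial}{\partial t}(dM_t)=(\mathrm{div}\,T-S_1f)\,dM_t,\qquad T=\Bigl(\frac{\partial X}{\partial t}\Bigr)^{\!\top},
\]
which will be used throughout. For $r=1$, differentiating $\mathcal A_1=-\int_MS_1\,dM_t$ with Lemma~\ref{lemma:computing the time-derivative of Sr} (index $0$), and killing the Laplacian and tangential divergences by integration by parts on the closed manifold $M$, produces $\mathcal A_1'(t)=\int_M(2S_2+cn)f\,dM_t$, matching the statement with $c_1=cn$.

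For the inductive step ($r\geq 2$), the recurrence $F_r=(-1)^rS_r-\frac{c(n-r+1)}{r-1}F_{r-2}$ gives
\[
\mathcal A_r'(t)=(-1)^r\frac{d}{dt}\int_MS_r\,dM_t-\frac{c(n-r+1)}{r-1}\mathcal A_{r-2}'(t),
\]
so the heart of the argument is the first term. Apply Lemma~\ref{lemma:computing the time-derivative of Sr} with $r$ replaced by $r-1$, combine with the formula for $\partial_t(dM_t)$ above, and integrate. Two cancellations occur on the closed manifold $M$: first, $\int_ML_{r-1}f\,dM_t=\int_M\mathrm{div}(P_{r-1}\nabla f)\,dM_t=0$ because $P_{r-1}$ is divergence-free in constant curvature (cf.~\cite{Alias:07}); second, $\int_M[\langle T,\nabla S_r\rangle+S_r\,\mathrm{div}\,T]\,dM_t=\int_M\mathrm{div}(S_rT)\,dM_t=0$. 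Substituting the trace formulas $\mathrm{tr}(P_{r-1})=(-1)^{r-1}(n-r+1)S_{r-1}$ and $\mathrm{tr}(A^2P_{r-1})=(-1)^{r-1}(S_1S_r-(r+1)S_{r+1})$ from~\eqref{eq:tracos dos Pr}, and observing that the $S_1S_r$ terms cancel against the $-S_1S_r$ coming from $\partial_t(dM_t)$, yields
\[
\frac{d}{dt}\int_MS_r\,dM_t=\int_M\bigl[-c(n-r+1)S_{r-1}-(r+1)S_{r+1}\bigr]f\,dM_t.
\]

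Inserting this and the inductive hypothesis, the $S_{r-1}$ coefficient in $\mathcal A_r'(t)$ becomes $(-1)^{r+1}c(n-r+1)-\frac{c(n-r+1)}{r-1}(-1)^{r-1}(r-1)=0$ since $(-1)^{r+1}=(-1)^{r-1}$, leaving precisely
\[
\mathcal A_r'(t)=\int_M\bigl[(-1)^{r+1}(r+1)S_{r+1}+c_r\bigr]f\,dM_t,\qquad c_r=-\frac{c(n-r+1)}{r-1}c_{r-2}.
\]
Since $c_0=0$, this forces $c_r=0$ for even $r$; for odd $r$, iterating the recursion from $c_1=cn$ and using the identity $c\cdot(-c)^{(r-1)/2}=-(-c)^{(r+1)/2}$ yields the closed form in the statement. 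The main obstacle I expect is the sign bookkeeping in the Lorentz setting: once the first variation of area is computed correctly (fixing the sign in front of $S_1f$ under the convention that $N$ is future-pointing timelike), the cancellation of the $S_{r-1}$ terms and the final recursion for $c_r$ are forced, and the remainder reduces to routine algebra driven by the trace identities and the recurrence defining $F_r$.
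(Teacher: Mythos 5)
Your proof is correct and is essentially the argument the paper relies on by citation (Proposition 2.3 of \cite{Camargo:09}, the Lorentz analogue of Proposition 4.1 of \cite{Barbosa:97}): induction on $r$ through the recurrence defining $F_r$, Lemma~\ref{lemma:computing the time-derivative of Sr} with shifted index, the variation $\partial_t(dM_t)=(\mathrm{div}\,T-S_1f)\,dM_t$, the trace identities \eqref{eq:tracos dos Pr}, and the divergence theorem (using that $P_{r-1}$ is divergence-free in constant curvature). Your sign bookkeeping, the two base cases $c_0=0$, $c_1=cn$, and the recursion $c_r=-\tfrac{c(n-r+1)}{r-1}c_{r-2}$ reproducing the stated closed form all check out.
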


The proof of this result is a straightforward consequence of Proposition 2.3 in \cite{Camargo:09}.

In order to characterize spacelike immersions of constant $(r+1)-$th mean curvature, let $\lambda$ be a real constant and $\mathcal J_r:(-\epsilon,\epsilon)\rightarrow\mathbb R$ be the {\em Jacobi functional} associated to the variation $X$, i.e., 
$$\mathcal J_r(t)=\mathcal A_r(t)-\lambda\mathcal V(t).$$
As an immediate consequence of (\ref{eq:first variation}) we get
$$\mathcal J_r'(t)=\int_M[b_rH_{r+1}+c_r-\lambda]fdM_t,$$
where $b_r=(r+1){n\choose r+1}$. Therefore, if we choose $\lambda=c_r+b_r\overline H_{r+1}(0)$, where
$$\overline H_{r+1}(0)=\frac{1}{\mathcal A_0(0)}\int_MH_{r+1}(0)dM$$
is the mean of the $(r+1)$-th curvature $H_{r+1}(0)$ of $M$, we arrive at
$$\mathcal J_r'(t)=b_r\int_M[H_{r+1}-\overline H_{r+1}(0)]fdM_t.$$
Hence, a standard argument (cf.~\cite{BdC:84}) shows that $M$ is a critical point of $\mathcal J_r$ for all variations of $x$ if and only if $M$ has constant $(r+1)$-th mean curvature.

We wish to study spacelike immersions $x:M^n\rightarrow\overline M^{n+1}$ that maximize $\mathcal A_r$ for all volume-preserving variations $X$ of $x$. The above dicussion shows that $M$ must have constant $(r+1)$-th mean curvature and, for such an $M$, one is naturally lead to compute the second variation of $\mathcal A_r$. This
motivates the following

\begin{definition}
Let $\overline M^{n+1}_c$ be a time-oriented Lorentz manifold of constant curvature $c$, and $x:M^n\rightarrow\overline M^{n+1}$ be a closed spacelike hypersurface having constant $(r+1)$-th mean
curvature. We say that $x$ is $r$-stable if $\mathcal A_r''(0)\leq 0$, for all volume-preserving variation of $x$.
\end{definition}

\begin{remark}
Let $x:M^n\rightarrow\overline M^{n+1}_c$ be a closed spacelike hypersurface with constant $(r+1)$-th mean curvature and denote by $\mathcal G$ the set of differential functions $f:M^n\rightarrow\mathbb R$ with $\int_MfdM_t=0$. Just as
\cite{Ximin} we can establish the following criterion for stability: {\it $x$ is $r$-stable if and only if $\mathcal J_r''(0)\leq 0$, for all $f\in \mathcal G$}.
\end{remark}

The sought formula for the second variation of $\mathcal J_r$ is another straightforward consequence of Proposition~\ref{prop:first variation}.

\begin{proposition} Let $x:M^n\rightarrow\overline M^{n+1}_c$ be a closed spacelike hypersurface of the time-oriented Lorentz manifold $\overline M^{n+1}_c$, having constant $(r+1)$-mean curvature $H_{r+1}$. If $X:M^n\times(-\epsilon,\epsilon)\rightarrow\overline M^{n+1}_c$ is a variation of $x$, then $J_r''(0)$ is given by
\begin{equation}\label{eq:second formula of variation}
\mathcal J_r''(0)(f)=(r+1)\int_M\left[L_r(f)+\{c{\rm tr}(P_r)-{\rm
tr}(A^2P_r)\}f\right]fdM.
\end{equation}
\end{proposition}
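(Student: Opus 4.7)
The plan is to differentiate the first variation formula~\eqref{eq:first variation} once more in $t$ and evaluate at $t=0$, substituting the expression for $\partial S_{r+1}/\partial t$ supplied by Lemma~\ref{lemma:computing the time-derivative of Sr}. Since $\mathcal J_r=\mathcal A_r-\lambda\mathcal V$, combining Lemma~\ref{lemma:first variation} with Proposition~\ref{prop:first variation} yields
$$\mathcal J_r'(t)=\int_M\bigl[(-1)^{r+1}(r+1)S_{r+1}(t)+c_r-\lambda\bigr]\,f\,dM_t.$$
Differentiating in $t$ by the product rule produces two contributions: one in which $\partial S_{r+1}/\partial t$ appears, and one in which $f\,dM_t$ is differentiated while the bracket is kept.

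I would eliminate the second contribution by exploiting the freedom in $\lambda$: as in the discussion preceding the definition of $r$-stability, choose $\lambda=(-1)^{r+1}(r+1)S_{r+1}(0)+c_r$. Because $H_{r+1}$, and hence $S_{r+1}$, is constant on $M$ at $t=0$, the bracket is identically zero there, so the entire product-rule term vanishes upon evaluation at $t=0$. For the surviving piece, Lemma~\ref{lemma:computing the time-derivative of Sr} provides
$$\frac{\partial S_{r+1}}{\partial t}\bigg|_{t=0}=(-1)^{r+1}\bigl[L_r f+c\,\mathrm{tr}(P_r)f-\mathrm{tr}(A^2P_r)f\bigr]+\Bigl\langle\bigl(\partial X/\partial t\bigr)^{\top},\nabla S_{r+1}\Bigr\rangle;$$
since $S_{r+1}$ is spatially constant on $M$ at $t=0$, $\nabla S_{r+1}=0$ and the tangential correction drops out.

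Multiplying this surviving expression by $(-1)^{r+1}(r+1)$ contributes an overall sign $(-1)^{2(r+1)}=+1$ and assembles exactly the integrand of~\eqref{eq:second formula of variation}. I do not foresee any genuine obstacle here: the argument reduces to bookkeeping of signs together with two vanishing observations---the bracket at $t=0$ and the tangential term in Lemma~\ref{lemma:computing the time-derivative of Sr}---each traceable to the constancy of $H_{r+1}$ on $M$.
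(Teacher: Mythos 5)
Your proposal is correct and takes essentially the same route the paper intends: the paper presents the formula as a ``straightforward consequence'' of Proposition~\ref{prop:first variation}, namely differentiating $\mathcal J_r'(t)=\int_M[(-1)^{r+1}(r+1)S_{r+1}+c_r-\lambda]f\,dM_t$ at $t=0$ exactly as you do. Your two vanishing observations are the right ones---the bracket vanishes at $t=0$ because constancy of $H_{r+1}$ makes the paper's choice $\lambda=c_r+b_r\overline H_{r+1}(0)$ coincide with yours, and the tangential term drops since $\nabla S_{r+1}=0$---and the sign bookkeeping via Lemma~\ref{lemma:computing the time-derivative of Sr} yields precisely \eqref{eq:second formula of variation}.
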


\section{A Characterization of $r$-Stable Spacelike Hypersurfaces}

As in the previous section, let $\overline M^{n+1}$ be a Lorentz manifold. A vector field $V$ on $\overline M^{n+1}$ is said to be {\em conformal} if
\begin{equation}
\mathcal L_V\langle\,\,,\,\,\rangle=2\psi\langle\,\,,\,\,\rangle
\end{equation}
for some function $\psi\in C^{\infty}(\overline M)$, where $\mathcal L$ stands for the Lie derivative of the Lorentz metric of $\overline M$. The function $\psi$ is called the {\em conformal factor} of $V$.

Since $\mathcal L_V(X)=[V,X]$ for all $X\in\mathcal X(\overline M)$, it follows from the tensorial character of $\mathcal L_V$ that $V\in\mathcal X(\overline M)$ is conformal if and only if
\begin{equation}\label{eq:1.1}
\langle\overline\nabla_XV,Y \rangle+\langle X,\overline\nabla_YV\rangle=2\psi\langle X,Y\rangle,
\end{equation}
for all $X,Y\in\mathcal X(\overline M)$. In particular, $V$ is a Killing vector field relatively to $\overline g$ if and only if $\psi\equiv 0$. Observe that the function $\psi$ can be characterized as 
$$\psi=\frac{1}{n+1}Div_{\, \overline M}\, V.$$

An interesting particular case of a conformal vector field $V$ is that in which $\overline\nabla_XV=\psi X$ for all $X\in\mathcal X(\overline M)$; in this case we say that $V$ is closed, an allusion to the fact that its dual $1$-form is closed.

Any Lorentz manifold $\overline M^{n+1}$, possessing a globally defined, timelike conformal vector field is said to be a {\em conformally stationary spacetime}.

In what follows we need a formula first derived in~\cite{Alias:07}. As stated below, it is the Lorentz version of the one stated and proved in~\cite{Barros:09}.

\begin{lemma}\label{lemma:Lr of conformal vector field}
Let $\overline M^{n+1}_c$ be a conformally stationary Lorentz manifold having constant curvature $c$ and conformal vector field $V$. Let also $x:M^n\rightarrow\overline M^{n+1}_c$ be a spacelike hypersurface of $\overline M^{n+1}_c$ and $N$ be a future-pointing Gauss map on $M^n$. If $\eta=\langle V,N\rangle$, then
\begin{eqnarray}\label{eq:Laplacian formula_I}
L_r(\eta)&=&\{{\rm tr}(A^2P_r)-c\,{\rm tr}(P_r)\}\eta -b_rH_rN(\psi)\\
&&+b_rH_{r+1}\psi+\frac{b_r}{r+1}\langle V,\nabla
H_{r+1}\rangle,\nonumber
\end{eqnarray}
where $\psi:\overline M^{n+1}\rightarrow\mathbb R$ is the conformal factor of $V$, $H_j$ is the $j$-th mean curvature of $M^n$ and $\nabla H_j$ stands for the gradient of $H_j$ on $M^n$.
\end{lemma}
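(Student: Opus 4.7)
The plan is to compute $L_r\eta = \text{tr}(P_r\,\text{Hess}\,\eta)$ directly, reducing the resulting trace via the Gauss--Weingarten formulas, the Codazzi equation, and the fact that $\text{div}\,P_r = 0$ in a constant-curvature ambient (cited from~\cite{Alias:07}).

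First, decompose $V = V^\top - \eta N$ (the sign is forced by $\langle N,N\rangle=-1$) and differentiate $\eta = \langle V,N\rangle$ in a tangent direction $X$. Using $\overline\nabla_X N = -AX$ together with the conformal equation $\overline\nabla_X V = \psi X + T(X)$, where $T$ denotes the skew-symmetric part of $\overline\nabla V$, the tangent/normal split of $\overline\nabla_X V$ yields
\begin{equation*}
\nabla\eta = -AV^\top - T(N)^\top, \qquad \nabla_X V^\top = \psi X - \eta AX + (T(X))^\top.
\end{equation*}
Differentiating once more,
\begin{equation*}
\nabla_X\nabla\eta = -(\nabla_X A)V^\top - \psi AX + \eta A^2 X - A(T(X)^\top) - \nabla_X(T(N)^\top),
\end{equation*}
and tracing against $P_r$ produces
\begin{equation*}
L_r\eta = -\sum_i\langle(\nabla_{e_i}A)V^\top,P_r e_i\rangle - \psi\,\text{tr}(AP_r) + \eta\,\text{tr}(A^2P_r) + (T\text{-terms}).
\end{equation*}
From~\eqref{eq:tracos dos Pr}, the second summand equals $b_r\psi H_{r+1}$.

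The crucial step is the Codazzi-type sum. By symmetry of $\nabla_{e_i}A$, it equals
\begin{equation*}
\langle V^\top,\sum_i(\nabla_{e_i}A)(P_r e_i)\rangle = \langle V^\top,\text{div}(AP_r)\rangle,
\end{equation*}
where I used $\text{div}\,P_r = 0$. From the recursion $AP_r = P_{r+1} + (-1)^r S_{r+1}I$ and $\text{div}\,P_{r+1}=0$ one obtains $\text{div}(AP_r) = (-1)^r \nabla S_{r+1} = -\tfrac{b_r}{r+1}\nabla H_{r+1}$, so this contribution is $\tfrac{b_r}{r+1}\langle V,\nabla H_{r+1}\rangle$.

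To recast the answer in the stated form, I use that in a constant-curvature ambient the antisymmetrization $\overline R(Y,X)V = c(\langle X,V\rangle Y - \langle Y,V\rangle X)$ of $\overline\nabla_Y\overline\nabla_X V - \overline\nabla_X\overline\nabla_Y V$ relates $\overline\nabla\psi$ to $V$ and $T$. For closed conformal $V$ (as used in the main theorem) this yields $\overline\nabla\psi = -cV$, hence $N(\psi) = -c\eta$, so the combination $-c\,\text{tr}(P_r)\eta - b_r H_r N(\psi)$ vanishes identically and the stated formula is equivalent to the direct computation. For a general conformal $V$, the $T$-dependent pieces above assemble, after the same antisymmetrization, into exactly $-c\,\text{tr}(P_r)\eta - b_r H_r N(\psi)$. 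The main obstacle is the careful bookkeeping of the Codazzi sum and, in the general case, tracking the skew part $T$ through two successive differentiations.
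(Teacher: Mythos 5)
The paper itself does not prove this lemma: it is quoted from \cite{Alias:07} as the Lorentzian version of the formula in \cite{Barros:09}. So your direct computation is, by necessity, a different route from the paper's (which is a citation), and the steps you actually carry out are correct: the decomposition $V=V^{\top}-\eta N$, the formula $\nabla\eta=-AV^{\top}-T(N)$ (note $T(N)$ is automatically tangent since $\langle T(N),N\rangle=0$), the second derivative, the identification $-\psi\,\mathrm{tr}(AP_r)=b_r\psi H_{r+1}$ via \eqref{eq:tracos dos Pr}, and the divergence step $\sum_i\langle(\nabla_{e_i}A)V^{\top},P_re_i\rangle=\langle V^{\top},\mathrm{div}(AP_r)\rangle=-\tfrac{b_r}{r+1}\langle V,\nabla H_{r+1}\rangle$ using $\mathrm{div}\,P_r=0$ and $AP_r=P_{r+1}+(-1)^rS_{r+1}I$ are all fine.

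The gap is your last sentence: the claim that the $T$-dependent pieces ``assemble, after the same antisymmetrization, into exactly $-c\,\mathrm{tr}(P_r)\eta-b_rH_rN(\psi)$'' is precisely the part of the lemma where $c$ and $N(\psi)$ enter, and you assert it instead of proving it. Your observation for closed $V$ (where $\overline\nabla\psi=-cV$ makes the two terms cancel) does not cover the case the paper actually needs: Corollary \ref{corollary:Lr of killing fields} applies the lemma to a Killing field $W$, which is conformal with $\psi\equiv0$ but in general not closed, so $T\neq0$ is the essential situation. Fortunately the claim is true and can be completed along the lines you hint at. First, the two purely tangential $T$-terms, $\sum_i\langle A(T(e_i)^{\top}),P_re_i\rangle$ and $\sum_i\langle T(Ae_i),P_re_i\rangle$ (the latter arising from $\overline\nabla_{e_i}(T(N))=(\overline\nabla_{e_i}T)(N)-T(Ae_i)$), vanish because each is the trace of the symmetric operator $AP_r$ against a skew-symmetric one. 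Second, antisymmetrizing $\overline\nabla(\overline\nabla V)$ gives $(\overline\nabla_XT)(Y)-(\overline\nabla_YT)(X)=\overline R(X,Y)V-X(\psi)Y+Y(\psi)X$; taking $X=e_i$, $Y=N$ and pairing with $P_re_i$, the $(\overline\nabla_NT)$ contribution dies by the same symmetric-versus-skew argument, the $e_i(\psi)N$ term is orthogonal to $P_re_i$, and with the convention $\overline R(X,Y)Z=c(\langle Y,Z\rangle X-\langle X,Z\rangle Y)$ one gets $\sum_i\langle\overline R(e_i,N)V,P_re_i\rangle=c\,\eta\,\mathrm{tr}(P_r)$, so that the remaining term contributes $-c\,\mathrm{tr}(P_r)\eta-\mathrm{tr}(P_r)N(\psi)=-c\,\mathrm{tr}(P_r)\eta-b_rH_rN(\psi)$, as required. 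With this computation written out (and the sign conventions for $A$ and $\overline R$ fixed once and for all), your proof is complete; as submitted, the crucial curvature and $N(\psi)$ terms are taken on faith.
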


In particular, we obtain the following

\begin{corollary}\label{corollary:Lr of killing fields}
Let $\overline M^{n+1}_c$ be a conformally stationary Lorentz manifold having constant curvature $c$ and Killing vector field $W$. Let also $x:M^n\rightarrow\overline M^{n+1}_c$ be a spacelike hypersurface having constant $(r+1)$-th mean curvature $H_{r+1}$, $N$ be a future-pointing Gauss map on $M^n$ and $\eta=\langle W,N\rangle$, then
$$L_r(\eta)+\{c\,{\rm tr}(P_r)-{\rm tr}(A^2P_r)\}\eta =0.$$
In particular, if $x:M^n\rightarrow\overline M^{n+1}_c$ is a closed spacelike hypersurface with constant $(r+1)$-th mean curvature such that $\lambda=c{\rm tr}(P_r)-{\rm tr}(A^2P_r)$ is constant, then $\lambda$ is an eigenvalue of the operator $L_r$ in $M^n$ with eigenfunction $\eta$.
\end{corollary}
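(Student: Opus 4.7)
The corollary follows almost immediately by specializing Lemma \ref{lemma:Lr of conformal vector field} to the case of a Killing vector field. The plan is as follows.

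First I would take $V = W$ in the formula (\ref{eq:Laplacian formula_I}). Since $W$ is Killing, its conformal factor vanishes identically, $\psi \equiv 0$. This kills two of the four terms on the right-hand side of (\ref{eq:Laplacian formula_I}): the term $-b_r H_r N(\psi)$ vanishes because $N(\psi) = N(0) = 0$, and the term $b_r H_{r+1}\psi$ vanishes trivially.

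Next, I would use the hypothesis that $H_{r+1}$ is constant on $M^n$, so that $\nabla H_{r+1} = 0$, which eliminates the remaining term $\frac{b_r}{r+1}\langle V,\nabla H_{r+1}\rangle$. What is left of (\ref{eq:Laplacian formula_I}) is precisely
$$L_r(\eta) = \{{\rm tr}(A^2 P_r) - c\,{\rm tr}(P_r)\}\eta,$$
which rearranges to the first claimed identity
$$L_r(\eta) + \{c\,{\rm tr}(P_r) - {\rm tr}(A^2 P_r)\}\eta = 0.$$

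For the ``In particular'' part, I would simply substitute the hypothesis $\lambda = c\,{\rm tr}(P_r) - {\rm tr}(A^2 P_r)$ into the identity just obtained, yielding $L_r(\eta) + \lambda \eta = 0$ on the closed manifold $M^n$, which by the convention in use exhibits $\lambda$ as an eigenvalue of $L_r$ with eigenfunction $\eta$. Since the closedness of $M^n$ ensures $\eta$ is a globally defined smooth function (so the eigenvalue equation makes sense in the usual spectral-theoretic sense), no further verification is needed. There is no real obstacle here: the corollary is essentially a direct reading of Lemma \ref{lemma:Lr of conformal vector field} once the Killing and constant-curvature hypotheses are imposed, and the only care needed is to confirm that each of the three discarded terms truly vanishes, which I have noted above.
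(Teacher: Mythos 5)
Your proposal is correct and follows exactly the paper's (implicit) derivation: the corollary is obtained by specializing Lemma \ref{lemma:Lr of conformal vector field} to a Killing field, where $\psi\equiv 0$ kills the $N(\psi)$ and $H_{r+1}\psi$ terms, constancy of $H_{r+1}$ kills the gradient term, and the ``in particular'' part is just substitution of the constant $\lambda$. No discrepancy with the paper's approach.
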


\begin{remark}\label{remark:ellipticity} Assuming that the conformal vector field $V$ is closed and such that
$Div_{\, \overline M}\, V$ does not vanish on $M^n$, then there exists an elliptic point in $ M^n$ (cf.~ Corollary 5.5 of~\cite{ABC:03}). Moreover, if $M^n$ has an elliptic point and $H_{r+1}> 0$ on $M$, for $2\leq r\leq n-1$, then $L_r$ is elliptic (cf.~Lemma 3.3 of \cite{Alias:07}). In the case $r=1$, the hypothesis $H_2>0$ garantees the ellipticity of $L_1$ without the additional assumption on the existence of an elliptic point (cf.~Lemma 3.2 of \cite{Alias:07}).
\end{remark}

We can now state and prove our main result.

\begin{theorem}\label{thm:r estability eigenvalue}
Let $\overline M^{n+1}_c$ be a conformally stationary Lorentz manifold with constant curvature $c$. Suppose that $\overline M^{n+1}_c$ has a closed conformal vector field $V$ and a Killing vector field $W$. Let $x:M^n\rightarrow\overline M^{n+1}_c$ be a closed spacelike hypersurface, with positive constant $(r+1)$-th
mean curvature $H_{r+1}$ such that
$$\lambda=c(n-r){n\choose r}H_r-nH_1{n\choose r+1}H_{r+1}-(r+2){n\choose r+2}H_{r+2}$$ 
is constant. Assume also that $Div_{\, \overline M}\, V$ does not vanish on $M^n$. Then $x$ is $r$-stable if and only  if $\lambda$ is the first eigenvalue of $L_r$ on $M^n$.
\end{theorem}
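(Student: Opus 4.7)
My plan is to reformulate the $r$-stability condition as a Rayleigh inequality for the elliptic operator $-L_r$, and then use the Killing field $W$ to produce an eigenfunction that realises the first eigenvalue. Matching the two bounds will identify $\lambda$ with the first positive eigenvalue $\Lambda_1$ of $-L_r$.

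First I use the trace identities~\eqref{eq:tracos dos Pr} to recognise that, up to the sign conventions of the paper, the scalar $\lambda$ in the statement equals $c\,{\rm tr}(P_r)-{\rm tr}(A^2P_r)$; its assumed constancy is exactly what is needed to treat it as an eigenvalue. Substituting into the second variation formula~\eqref{eq:second formula of variation} gives $\mathcal J_r''(0)(f)=(r+1)\int_M[L_r(f)+\lambda f]f\,dM$, and the constant curvature of $\overline M^{n+1}_c$ allows $L_r(f)={\rm div}(P_r\nabla f)$, so that integration by parts on the closed $M^n$ produces
\begin{equation*}
\mathcal J_r''(0)(f)=(r+1)\int_M\bigl[\lambda f^2-\langle P_r\nabla f,\nabla f\rangle\bigr]\,dM.
\end{equation*}
By Remark~\ref{remark:ellipticity}, the assumption on $V$ yields an elliptic point of $M^n$, and together with $H_{r+1}>0$ this forces ellipticity of $L_r$ and positive-definiteness of $P_r$. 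Consequently $-L_r$ is a self-adjoint, nonnegative elliptic operator on the closed manifold $M^n$, whose spectrum is discrete of the form $0=\Lambda_0<\Lambda_1\le\Lambda_2\le\cdots$, and whose first positive eigenvalue admits the variational description
\begin{equation*}
\Lambda_1=\inf_{f\in\mathcal G\setminus\{0\}}\frac{\int_M\langle P_r\nabla f,\nabla f\rangle\,dM}{\int_M f^2\,dM}.
\end{equation*}

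With this setup, both directions of the equivalence follow. If $\lambda=\Lambda_1$, the Rayleigh bound gives $\int_M\langle P_r\nabla f,\nabla f\rangle\ge\lambda\int_M f^2$ for every $f\in\mathcal G$, so $\mathcal J_r''(0)(f)\le 0$ and $x$ is $r$-stable. Conversely, $r$-stability forces the same Rayleigh inequality and hence $\Lambda_1\ge\lambda$. For the opposite inequality I invoke Corollary~\ref{corollary:Lr of killing fields} applied to $W$: the function $\eta=\langle W,N\rangle$ satisfies $L_r(\eta)+\lambda\eta=0$, so $\eta$ is an eigenfunction of $-L_r$ with eigenvalue $\lambda$. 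Since $L_r$ is self-adjoint and the eigenspace for the eigenvalue $0$ consists of the constants, $\eta$ must be $L^2$-orthogonal to the constants whenever $\lambda\neq 0$, i.e.\ $\eta\in\mathcal G$. Testing the Rayleigh quotient at $\eta$ then yields $\Lambda_1\le\lambda$, and comparison with the previous bound gives $\lambda=\Lambda_1$.

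The main point that requires care is the nondegeneracy of $\eta$: the Rayleigh argument silently assumes both $\eta\not\equiv 0$ (so that $W$ is not everywhere tangent to $M^n$) and $\lambda\neq 0$ (so that $\eta$ is forced into $\mathcal G$ rather than being a constant eigenfunction). Both should follow from the running hypotheses---positivity of $H_{r+1}$, ellipticity of $L_r$, non-vanishing of ${\rm Div}_{\overline M}V$, and a generic choice of Killing direction---but any complete proof must dispose of these degenerate cases. Apart from this subtlety, everything is a direct packaging of the second variation formula, the divergence theorem, and the min--max characterisation of the first eigenvalue.
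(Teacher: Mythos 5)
Your proposal is correct and takes essentially the same route as the paper's proof: identify $\lambda$ with $c\,{\rm tr}(P_r)-{\rm tr}(A^2P_r)$ via \eqref{eq:tracos dos Pr}, get ellipticity of $L_r$ from Remark~\ref{remark:ellipticity}, use the Killing field through Corollary~\ref{corollary:Lr of killing fields} to make $\eta=\langle W,N\rangle$ an eigenfunction with eigenvalue $\lambda$, and compare $\lambda$ with the first eigenvalue via the variational (Rayleigh) characterization together with the second variation formula \eqref{eq:second formula of variation}. The degenerate cases you flag ($\eta\not\equiv 0$ and $\lambda\neq 0$, and the zero-mean requirement on the test eigenfunction) are likewise passed over silently in the paper's own argument, so your write-up is no less complete than the original.
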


\begin{proof}
From Remark \ref{remark:ellipticity} the operator $L_r$ is elliptic. On the other hand, by using the formulas (\ref{eq:tracos dos Pr}), it is easy to show that $\lambda=c{\rm tr}(P_r)-{\rm tr}(A^2P_r)$. Therefore, since that $\lambda$ is constant and $W$ is a Killing field on $\overline M^{n+1}_c$, Corollary \ref{corollary:Lr of killing fields} guarantees that $\lambda$ is in the spectrum of $L_r$.

Let $\lambda_1$ be the first eigenvalue of $L_r$ on $M^n$. If
$\lambda=\lambda_1$, then the variational characterization of $\lambda_1$ gives
$$\lambda=\min_{f\in \, \mathcal G\setminus \{0\}} \, \frac{-\int_M fL_r(f)dM}{\int_M f^2dM}.$$
It follows that, for any $f\in \mathcal G$,
\begin{eqnarray*}
 \mathcal J_r''(0)(f)&=&(r+1)\int_M\{fL_r(f)+\lambda f^2\}dM\\
&\leq&(r+1)(-\lambda+\lambda)\int_Mf^2dM=0,
\end{eqnarray*}
and $x$ is $r$-stable.

Now suppose that $x$ is $r-$stable, so that $\mathcal J_r''(0)(f)\leq 0$ for all $f\in\mathcal G$. Let $f$ be an eigenfunction associated to the first eigenvalue $\lambda_1$ of $L_r$. As was already observed, there exists a volume-preserving variation of $M$ whose variational field is $fN$. Consequently, by (\ref{eq:second formula of variation}) we get
$$0\geq\mathcal J_r''(0)(f)=(r+1)(-\lambda_1+\lambda)\int_Mf^2dM$$
and therefore $\lambda_1=\lambda$, since that $\lambda_1\leq \lambda$.
\end{proof}

\section{Applications to GRW spacetimes}

A particular class of conformally stationary spacetimes is that of {\em generalized Robertson-Walker} spacetimes, or {\em GRW} for short (cf.~\cite{ABC:03}), namely, warped products $\overline M^{n+1}=-I\times_{\phi}F^n$, where $I\subseteq\mathbb R$ is an interval with the metric $-ds^2$, $F^n$ is an $n$-dimensional Riemannian manifold and $\phi:I\rightarrow\mathbb R$ is positive and smooth. For such a space, let $\pi_I:\overline M^{n+1}\rightarrow I$
denote the canonical projection onto $I$. Then the vector field 
$$V=(\phi\circ\pi_I)\frac{\partial}{\partial s}$$
is a conformal, timelike and closed, with conformal factor $\psi=\phi'$, where the prime denotes differentiation with respect to $s$. Moreover (cf.~\cite{Montiel:99}), for $s_0\in I$, the (spacelike) leaf $M_{s_0}^n=\{s_0\}\times F^n$ is totally umbilical, with umbilicity factor $-\frac{\phi'(s_0)}{\phi(s_0)}$ with respect to the future-pointing unit normal vector field $N$.

If $\overline M^{n+1}=-I\times_{\phi}F^n$ is a GRW and $x:M^n\rightarrow\overline M^{n+1}$ is a complete spacelike
hypersurface of $\overline M^{n+1}$, such that $\phi\circ\pi_I$ is limited on $M^n$, then $\pi_F\big|_M:M^n\rightarrow F^n$ is necessarily a covering map (cf.~\cite{ABC:03}). In particular, if $M^n$ is closed then $F^n$ is automatically closed.

Also, recall (cf.~\cite{ONeill:83}) that a GRW as above has constant sectional curvature $c$ if and only if $F$ has constant sectional curvature $k$ and the warping function $\phi$ satisfies the ODE
$$\frac{\phi''}{\phi}=c=\frac{(\phi')^2+k}{\phi^2}.$$

In this setting, from Theorem~\ref{thm:r estability eigenvalue} we obtain the following

\begin{corollary}\label{corollary:GRW spacetime} Let $x:M^n\rightarrow -I\times_{\phi}F^n$ be a closed spacelike
hypersurface with constant $(r+1)$-th mean curvature $H_{r+1}>0$. Suppose also that $-I\times_{\phi}F^n$ is of constant curvature $c$, has a Killing vector field and $\phi'$ does not vanish on $M^n$. If 
$$\lambda=c(n-r){n\choose r}H_r-nH{n\choose r+1}H_{r+1}-(r+2){n\choose r+2}H_{r+2}$$ 
is constant, then $x$ is $r$-stable if and only if $\lambda$ is the first eigenvalue of $L_r$ on $M^n$.
\end{corollary}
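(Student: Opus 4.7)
The plan is to deduce this corollary as a direct application of Theorem~\ref{thm:r estability eigenvalue}, so the main task is to verify that a GRW spacetime of constant curvature $c$, under the stated hypotheses, meets all of its assumptions. The only nontrivial identification is to exhibit a closed conformal vector field and to translate the non-vanishing of $\phi'$ into the required non-vanishing of $Div_{\, \overline M}\, V$.

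First, I would recall from the introductory discussion of GRW spacetimes that the timelike vector field $V=(\phi\circ\pi_I)\partial_s$ on $-I\times_{\phi}F^n$ is globally defined, closed, and conformal, with conformal factor $\psi=\phi'$. Hence $-I\times_{\phi}F^n$ is a conformally stationary Lorentz manifold, and it is assumed of constant sectional curvature $c$ (which by the ODE recalled just above the corollary is a purely intrinsic condition on $\phi$ and $F$). The existence of a Killing vector field $W$ is an explicit hypothesis, and the constancy and positivity of $H_{r+1}$, together with the constancy of the expression defining $\lambda$, are also assumed.

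Next, I would verify the non-vanishing condition on $M^n$. Since $V$ is closed conformal with conformal factor $\psi=\phi'$, the characterization $\psi=\frac{1}{n+1}Div_{\, \overline M}\, V$ given in Section~3 yields
$$Div_{\, \overline M}\, V=(n+1)\phi'\circ\pi_I.$$
Thus the hypothesis that $\phi'$ does not vanish on $M^n$ is exactly the hypothesis that $Div_{\, \overline M}\, V$ does not vanish on $M^n$, which is needed to invoke Remark~\ref{remark:ellipticity} (guaranteeing the existence of an elliptic point and hence the ellipticity of $L_r$ under the sign condition $H_{r+1}>0$).

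At this point every hypothesis of Theorem~\ref{thm:r estability eigenvalue} is in place, so the theorem delivers the desired equivalence between $r$-stability of $x$ and the property that $\lambda$ is the first eigenvalue of $L_r$ on $M^n$. I do not foresee any serious obstacle, since the only substantive step is the identification $Div_{\, \overline M}\, V=(n+1)\phi'$, which is immediate from the closedness of $V$.
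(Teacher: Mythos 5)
Your proposal is correct and follows essentially the same route as the paper: the corollary is obtained by applying Theorem~\ref{thm:r estability eigenvalue} to the closed conformal field $V=(\phi\circ\pi_I)\partial_s$ of the GRW spacetime, with the identification $Div_{\,\overline M}\,V=(n+1)\,\phi'\circ\pi_I$ translating the non-vanishing of $\phi'$ on $M^n$ into the divergence hypothesis of the theorem.
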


A particular example of GRW spacetime is de Sitter space. More precisely, let $\mathbb L^{n+2}$ denote the $(n+2)$-dimensional Lorentz-Minkowski space ($n\geq 2$), that is, the real vector space $\mathbb R^{n+2}$, endowed with the Lorentz metric
$$\left\langle v,w\right\rangle ={\displaystyle\sum\limits_{i=1}^{n+1}}v_{i}w_{i}-v_{n+2}w_{n+2},$$
for all $v,w\in\mathbb{R}^{n+2}$. We define the $\left(n+1\right)$-dimensional de Sitter space $\mathbb S_1^{n+1}$
as the following hyperquadric of $\mathbb{L}^{n+2}$
$$\mathbb S_1^{n+1}=\left\{p\in\mathbb L^{n+2}:\left\langle p,p\right\rangle=1\right\}.$$
From the above definition it is easy to show that the metric induced from $\left\langle\,\,,\,\right\rangle $ turns $\mathbb S_1^{n+1}$ into a Lorentz manifold with constant sectional curvature $1$.

Choose a unit timelike vector $a\in\mathbb L^{n+2}$, then $V(p)=a-\langle p, a\rangle p$, $p\in\mathbb S_1^{n+1}$ is a
conformal and closed timelike vector field. It foliates the de Sitter space by means of umbilical round spheres $M_{\tau}=\{p\in \mathbb S_1^{n+1}:\langle p,a\rangle=\tau\}$, $\tau\in\mathbb R$. The level set given by $\{p\in\mathbb S_1^{n+1}:\langle p,a\rangle=0\}$ defines a round sphere of radius one which is a totally geodesic hypersurface in $\mathbb S_1^{n+1}$. We will refer to that sphere as the equator of $\mathbb S_1^{n+1}$ determined by
$a$. This equator divides the de Sitter space into two connected components, the chronological future which is given by
$$\{p\in \mathbb S_1^{n+1}: (a ,p) < 0\},$$
and the chronological past, given by
$$\{p\in \mathbb S_1^{n+1}: (a ,p) > 0\}.$$

In the context of warped products, the de Sitter space can be thought of as the following GRW
$$\mathbb S_1^{n+1}=-\mathbb R\times_{\cosh s}\mathbb{S}^n,$$
where $\mathbb S^n$ means Riemannian unit sphere. We observe that there is a lot of possible choices for the unit timelike vector $a\in\mathbb L^{n+2}$ and, hence, a lot of ways to describe $\mathbb S_1^{n+1}$ as such a GRW (cf.~\cite{Montiel:99}, Section $4$). We notice that in this model, the equator of $\mathbb S_1^{n+1}$ is the
slice $\{0\}\times\mathbb{S}^n$ and, consequently, $\phi'(s)=\sinh s$ vanishes only on this slice. Finally, the vector field
$$V=\phi'(s)\frac{\partial}{\partial s}=(\sinh s)\frac{\partial}{\partial s}$$
is conformal, timelike and closed in $\mathbb S_1^{n+1}$.

In order to rewrite Theorem \ref{thm:r estability eigenvalue} for the case of closed spacelike hypersurfaces immersed
in de Sitter space, we recall some facts. 
\begin{enumerate}
\item[(a)] Killing vector fields in de Sitter space $\mathbb S_1^{n+1}$ can be constructed by fixing two vectors $u$ and $v$ in the Lorentz-Minkowski space $\mathbb L^{n+2}$ and a non-zero constant $k\in \mathbb R$, and considering the vector field $W=k\{\langle u,\cdot \rangle v -\langle v,\cdot \rangle u\}$. Geometrically, $W(x)$ determines an orthogonal direction to the position vector $x$ on the subspace spanned by $u$ and $v$ (cf.~ Example 1 of \cite{de Lima:07}).
\item[(b)] Let $x:M^n\rightarrow\mathbb S_1^{n+1}$ be a closed spacelike hypersurface with positive constant $(r+1)-$th mean curvature. Assuming that $M^n$ is contained in the chronological future (or past) of the equator of $S_1^{n+1}$ then $Div_{\, \overline M}\, V$ does not vanish on $M^n$. Also, there exists an elliptic point in
$M^n$ (cf.~Theorem 7 of \cite{Aledo}) and, if $H_{r+1} > 0$ on $M$ for some $2\leq r\leq n-1$, then, for all $1\leq j\leq r$, the operator $L_j$ is elliptic (cf.~Lemma 3.3 of \cite{Alias:07}). In the case of $L_1$, it is sufficient to require that $R <c$ (cf.~Lemma 3.2 of ~\cite{Alias:07}).
\end{enumerate}

We can now state the following corollary of Theorem~\ref{thm:r estability eigenvalue}.

\begin{corollary}
Let $x:M^n\rightarrow\mathbb S_1^{n+1}$ be a closed spacelike hypersurface, contained in the chronological future (or past) of an equator of $\mathbb S_1^{n+1}$, with positive constant $(r+1)$-th mean curvature such that
$$\lambda=(n-r){n\choose r}H_r-nH_1{n\choose r+1}H_{r+1}-(r+2){n\choose r+2}H_{r+2}$$ 
is constant. Then $x$ is $r$-stable if and only if $\lambda$ is the first eigenvalue of $L_r$ on $M^n$.
\end{corollary}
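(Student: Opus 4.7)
The plan is to reduce this statement to a direct application of Theorem~\ref{thm:r estability eigenvalue} with the ambient $\overline M^{n+1}_c=\mathbb S_1^{n+1}$, in which case $c=1$ and the expression defining $\lambda$ coincides (up to the factor $c=1$ in front of the first term) with the one appearing in the theorem.

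First, I would check the structural hypotheses of Theorem~\ref{thm:r estability eigenvalue}. The de Sitter space is a conformally stationary Lorentz manifold of constant sectional curvature $c=1$; it carries the closed, timelike conformal vector field $V=(\sinh s)\,\partial/\partial s$ described in the excerpt, and, by item (a) preceding the corollary, it also admits Killing vector fields of the form $W=k\{\langle u,\cdot\rangle v-\langle v,\cdot\rangle u\}$. Thus the two vector fields $V$ and $W$ required by the theorem are available.

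Next, I would transfer the hypotheses on $x$. The hypothesis that $M^n$ sits in the chronological future (or past) of the equator is exactly the geometric condition needed so that $\phi'(s)=\sinh s$ does not vanish along $M^n$; via the formula $\psi=\tfrac{1}{n+1}\mathrm{Div}_{\overline M}V$ this is equivalent to $\mathrm{Div}_{\overline M}V$ not vanishing on $M^n$, which is the remaining hypothesis of the theorem. The positivity and constancy of $H_{r+1}$, together with the existence of an elliptic point guaranteed by item (b), secure the ellipticity of $L_r$ as recorded in Remark~\ref{remark:ellipticity}. Finally, the expression for $\lambda$ in the corollary is precisely $c\,\mathrm{tr}(P_r)-\mathrm{tr}(A^2P_r)$ with $c=1$, as one sees by specialising the formulas in~\eqref{eq:tracos dos Pr}; by assumption this is constant.

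With every hypothesis of Theorem~\ref{thm:r estability eigenvalue} in force, the conclusion $x$ is $r$-stable $\Longleftrightarrow$ $\lambda$ is the first eigenvalue of $L_r$ on $M^n$ follows at once. There is no real obstacle here beyond carefully matching the notations: the only point worth double-checking is that in the corollary's displayed formula the coefficient in front of ${n\choose r}H_r$ is implicitly $c=1$, so that the identification with $c\,\mathrm{tr}(P_r)-\mathrm{tr}(A^2P_r)$ via~\eqref{eq:tracos dos Pr} is exact.
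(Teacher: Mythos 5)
Your proposal is correct and follows essentially the same route as the paper: the corollary is obtained by specializing Theorem~\ref{thm:r estability eigenvalue} to $\mathbb S_1^{n+1}$ with $c=1$, using the closed conformal field associated to a timelike direction $a$ (whose divergence vanishes exactly on the equator, so the chronological future/past hypothesis gives $Div_{\,\overline M}V\neq 0$ on $M^n$), the Killing fields from item (a), and the identification $\lambda=c\,{\rm tr}(P_r)-{\rm tr}(A^2P_r)$ via \eqref{eq:tracos dos Pr}. The only cosmetic caveat is that the closed timelike conformal field in the GRW model is $(\cosh s)\,\partial/\partial s$ with conformal factor $\sinh s$ (the paper's displayed $V=(\sinh s)\,\partial/\partial s$ is a slip you merely echo), which does not affect the argument since the nonvanishing condition is still $\sinh s\neq 0$ on $M^n$.
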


\begin{remark}
We remark that the round spheres of $\mathbb S_1^{n+1}$ are $r$-stable (cf.~\cite{Brasil:03}, Proposition 2).
\end{remark}

\section*{Acknowledgements}
This work was started when the fourth author was visiting the Departamento de Matem\'{a}tica e Estat\'{\i}stica of the
Universidade Federal de Campina Grande. He would like to thank this institution for its hospitality.


\begin{thebibliography}{10}

\bibitem{Aledo} J. A. Aledo, L. J. Al\'{\i}as and A. Romero,
{\em Integral formulas for compact space-like hypersurfaces in de
Sitter space: Applications to the case of constant higher order mean
curvature}, J. of Geom. and Physics, 31 (1999) 195--208.

\bibitem{ABC:03} L. J. Al\'{\i}as, A. Brasil Jr. and A. G. Colares,
{\em Integral Formulae for Spacelike Hypersurfaces in Conformally
Stationary Spacetimes and Applications}, Proc. Edinburgh Math. Soc.
46, 465--488 (2003).

\bibitem{Alias:07}
L. Alias. \and A. G. Colares. {\em Uniqueness of spacelike
hypersurfaces with constant higher order mean curvature in
generalized Robertson-Walker spacetimes}. Math. Proc. of the
Cambridge Phil. Soc. 143 (2007), 703--729.

\bibitem{BdC:84} J. L. M. Barbosa and M. do Carmo,
{\em Stability of Hypersurfaces with Constant Mean Curvature}, Math.
Z. 185 (1984) 339--353.

\bibitem{BdCE:88} J. L. M. Barbosa, M. do Carmo and J. Eschenburg,
{\em Stability of Hypersurfaces with Constant Mean Curvature}, Math.
Z. 197 (1988) 123--138.

\bibitem{Barbosa:97} J. L. M. Barbosa \and A. G. Colares,
{\em Stability of Hypersurfaces with Constant $r-$Mean Curvature},
Ann. Global Anal. Geom. 15 (1997) 277--297.

\bibitem{Barbosa:93} J. L. M. Barbosa \and V. Oliker,
{\em Spacelike Hypersurfaces with Constant Mean Curvature in Lorentz
Spaces}, Matem. Contempor\^anea 4 (1993) 27--44.

\bibitem{Barros:09}
A. Barros \and P. Sousa. {\em Compact graphs over a sphere of
constant second order mean curvature}. Proc. Amer. Math. Soc.  137
(2009), 3105--3114.

\bibitem{Brasil:03}
A. Brasil Jr. \and A. G. Colares. {\em Stability of spacelike
hypersurfaces with constant $r$-mean curvature in de Sitter space}.
Proceedings of the XII Fall Workshop on Geometry and Physics, Publ.
R. Soc. Mat. Esp. 7, R. Soc. Mat. Esp., Madrid (2004), 139--145.

\bibitem{Camargo:09} F. Camargo, A. Caminha, M. da Silva, and H. de Lima.
{\em On the $r$-stability of sapalike hypersurfaces}. Preprint, 2009
(arXiv:0911.2043v1).

\bibitem{de Lima:07} H.F. de Lima.
{\em Spacelike hypersurfaces with constant higher order mean
curvature in de Sitter space}. J. of Geom. and Physics, 57 (2007)
967--975.

\bibitem{Montiel:99} S. Montiel.
{\em Uniqueness of Spacelike Hypersurfaces of Constant Mean
Curvature in Foliated Spacetimes}, Math. Ann. 314 (1999) 529--553.

\bibitem{ONeill:83} B. O'Neill, {\em Semi-Riemannian Geometry, with Applications to
Relativity}. New York: Academic Press (1983).

\bibitem{Xin:03} Y. Xin.
{\em Minimal submanifolds and related topics}. World scientific
publishing co., Singapore (2003).

\bibitem{Ximin} L. Ximin and D. Junlei.
{\em Stable space-like hypersurfaces in the de Sitter Space}. Arch.
Mathematicum (BRNO) Tomus 40 (2004), 111--117.


\end{thebibliography}
\end{document}